\theoremstyle{plain}
\newtheorem{theorem}{Theorem}
\newtheorem{proposition}[theorem]{Proposition}
\newtheorem{lemma}[theorem]{Lemma}
\newtheorem{proposition.definition}[theorem]{Proposition/Definition}
\newtheorem{theoremalpha}{Theorem}
\newtheorem{corollaryalpha}[theoremalpha]{Corollary}
\theoremstyle{definition}
\newtheorem{remark}[theorem]{Remark}
\newcommand{\lra}{\longrightarrow}
\newcommand{\noi}{\noindent}
\newcommand{\PP}{\mathbf{P}}
\newcommand{\CC}{\mathbf{C}}
\newcommand{\OO}{\mathcal{O}}
\newcommand{\FF}{\mathcal{F}}
\newcommand{\EE}{\mathcal{E}}
\newcommand{\frakm}{\mathfrak{m}}
\newcommand{\eps}{\varepsilon}
\newcommand{\HH}[3]{H^{{#1}} \big( {#2} , {#3}
\big) }
\newcommand{\coker}{\textnormal{coker}}
\newcommand{\pro}{{pr}}
\newcommand{\Sym}{\textnormal{Sym}}
\newcommand{\ev}{\textnormal{ev}}
\newcommand{\ol}[1]{\overline{#1}}
\newcommand{\SSS}{\mathcal{S}}
\newcommand{\NNN}{\mathcal{N}}
\newcommand{\GGG}{\mathcal{G}}
\newcommand{\FFF}{\mathcal{F}}
\numberwithin{equation}{section}
\numberwithin{theorem}{section}
\begin{document}

\title{A vanishing theorem for weight one syzygies}

 \author{Lawrence Ein}
  \address{Department of Mathematics, University of Illinois at Chicago, 851 South Morgan St., Chicago, IL  60607}
 \email{{\tt ein@uic.edu}}
 \thanks{Research of the first author partially supported by NSF grant DMS-1501085.}

 \author{Robert Lazarsfeld}
  \address{Department of Mathematics, Stony Brook University, Stony Brook, New York 11794}
 \email{{\tt robert.lazarsfeld@stonybrook.edu}}
 \thanks{Research of the second author partially supported by NSF grant DMS-1439285.}

 \author{David Yang}
 \address{Department of Mathematics, MIT, Cambridge, MA 02139}
 \email{{\tt dhy@mit.edu}}
 
\maketitle
  
  \begin{abstract}
We give a criterion for the vanishing of the weight one syzygies associated to a line bundle $B$ in a sufficiently positive embedding of a smooth complex projective variety of arbitrary dimension.    \end{abstract}
    
 \section*{Introduction}

Inspired by the methods of Voisin in \cite{Voisin1} and \cite{Voisin2},  the first two authors recently proved  the gonality conjecture of \cite{GL}, asserting that one can read off the gonality of an algebraic curve $C$ from the syzgies of its ideal in any one embedding of sufficiently large degree. This was deduced  in \cite{EL2} as a special case of a vanishing theorem for the asymptotic syzygies associated to an arbitrary line bundle $B$ on $C$, and it was conjectured there that an analogous statement should hold on a smooth projective variety of any dimension. The purpose of this  note is to prove the conjecture in question.
 
 Turning to details, let $X$ be a smooth complex projective variety of dimension $n$, and set 
 \[ L_d \ = \ dA + P, \]
 where $A$ is ample and $P$ is arbitrary. We always assume that $d$ is sufficiently large so that $L_d$ is very ample, defining an embedding
 \[  X \ \subseteq \ \PP H^0(X, L_d) \, = \, \PP^{r_d}. \] Given an arbitrary line bundle $B$ on $X$, we wish to study the weight one syzygies of $B$ with respect to $L_d$ for $d \gg 0$. More precisely, let $S = \Sym \, H^0(X, L_d)$ be the homogeneous coordinate ring of $\PP H^0(X, L_d)$, and put
 \[   R \ = \  R(X, B; L_d) \ = \ \bigoplus_m \, H^0(X, B + mL_d). \]
 Thus $R$ is a finitely generated graded $S$-module, and hence has a minimal graded free resolution $E_\bullet = E_\bullet(B; L_d)$:
  \[
 \xymatrix{
0 \ar[r] & E_{r_d} \ar[r]& \ldots \ar[r] &    E_2 \ar[r] & E_1 \ar[r]  & E_0 \ar[r] & R \ar[r] & 0 , }\]
where $ E_p =\oplus S(-a_{p,j})$.  
 As customary, denote by $K_{p,q}(X, B;L_d)$ the finite-dimensional vector space of degree $p+q$ minimal generators of $E_p$, so that 
 \[  E_p(B; L_d)  \ = \ \bigoplus_q K_{p,q}(X, B; L_d) \otimes_\CC S(-p-q).  \]  We refer to elements of this group as $p^\text{th}$ syzygies of $B$ with respect to $L_d$ of  weight $q$. When $B = \OO_X$ we write simply $K_{p,q}(X; L_d)$, which -- provided that $d$ is large enough so that $L_d$ is normally generated --  are the vector spaces describing the syzygies of the homogeneous ideal $I_X \subset S$ of $X$ in $\PP H^0(X, L_d)$.  
 
 The question we address involves fixing $B$ and asking when it happens that 
 \[  K_{p,q}(X, B; L_d) \, = \, 0 \ \ \text{for } \, d \gg 0. \]
 When $q = 0$ or $q \ge 2$ the situation is largely understood thanks to results of \cite{Kosz1}, \cite{Kosz2}, \cite{SAD} and \cite{ASAV}. (See Remark \ref{Other.Kpq.Remark} for a summary.) 
Moreover in this range the statements are uniform in nature, in that they don't depend on the geometry of $X$ or $B$. However as suggested in \cite[Problem 7.1]{ASAV}, for $K_{p,1}(X, B; L_d)$ one can anticipate more precise asymptotic results that do involve geometry. This is what we establish here.

Recall that a line bundle $B$ on a smooth projective variety $X$ is said to be \textit{$p$-jet  very ample} if for every effective zero-cycle
\[   w \ = \ a_1 x_1 \, + \, \ldots \, + \, a_s x_s \]
of degree $p+1 = \sum a_i$ on $X$, the natural map
\[   \HH{0}{X}{B}  \lra \HH{0}{X}{B \otimes  \OO_X/  \frakm_1^{a_1} \cdot \ldots \cdot \frakm_s^{a_s} } \]
is surjective, where $\frakm_i  \subseteq \OO_X$ is the ideal sheaf of $x_i$. So for example  if $p=1$ this is simply asking that $B$ be very ample. When  $\dim X = 1$ the condition is the same as requiring that $B$ be $p$-very ample -- i.e. that every subscheme $\xi \subset X$ of length $p+1$ imposes independent conditions in $H^0(X, B)$ -- but in higher dimensions it is a stronger condition. 

Our main result is
\begin{theoremalpha} \label{Main.Thm.A}
If $B$ is $p$-jet  very ample, then 
\[  K_{p,1}(X, B; L_d) \ = \ 0 \ \text{ for } \ d \gg 0. \]
\end{theoremalpha}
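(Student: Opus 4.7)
My plan is to adapt to higher dimensions the Hilbert-scheme technique developed by Voisin \cite{Voisin1, Voisin2} and by the first two authors in \cite{EL2}. The strategy is to reinterpret the vanishing of $K_{p,1}(X, B; L_d)$ geometrically as the surjectivity of a natural multiplication map on the Hilbert scheme $X^{[p+1]}$ of $p+1$ points on $X$, and then verify this surjectivity for $d \gg 0$ by combining the $p$-jet very ampleness of $B$ with Serre vanishing applied to a very positive twist.

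The first step is a Koszul-theoretic reduction. For a globally generated line bundle $L$ on $X$, set $M_L = \ker\bigl(\HH{0}{X}{L}\otimes \OO_X \to L\bigr)$. From the exact sequence
\[
 0 \lra \wedge^{p+1} M_L \otimes B \lra \wedge^{p+1}\HH{0}{X}{L}\otimes B \lra \wedge^p M_L \otimes B \otimes L \lra 0,
\]
together with Serre vanishing applied to the higher cohomology of $\wedge^{p+1} M_{L_d} \otimes B$ (automatic for $d \gg 0$), one identifies $K_{p,1}(X, B; L_d)$ with the cokernel of
\[
 \wedge^{p+1}\HH{0}{X}{L_d} \otimes \HH{0}{X}{B} \lra \HH{0}{X}{\wedge^p M_{L_d}\otimes B\otimes L_d}.
\]

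I would then transfer the computation to $X^{[p+1]}$. For any sufficiently positive line bundle $F$ on $X$, the tautological bundle $F^{[p+1]}$ on $X^{[p+1]}$, obtained by pushing forward $F$ from the universal subscheme, has rank $p+1$; and a standard identification (in the spirit of Danila, Scala, and Voisin) expresses the target $\HH{0}{X}{\wedge^p M_L \otimes B \otimes L}$ as $\HH{0}{X^{[p+1]}}{B^{[p+1]}\otimes \det L^{[p+1]}}$, with the Koszul multiplication above corresponding to
\[
 \HH{0}{X}{B} \otimes \HH{0}{X^{[p+1]}}{\det L^{[p+1]}} \lra \HHH{0}{X^{[p+1]}}{B^{[p+1]} \otimes \det L^{[p+1]}}.
\]
The hypothesis of $p$-jet very ampleness is tailored precisely so that the evaluation $\HH{0}{X}{B}\otimes \OO_{X^{[p+1]}} \to B^{[p+1]}$ is surjective, since its fiber at a cluster $\xi$ is the restriction $\HH{0}{X}{B} \to \HH{0}{\xi}{B|_\xi}$. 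Denoting the kernel by $\mathcal{M}_B$, the desired surjectivity therefore reduces to
\[
 \HHH{1}{X^{[p+1]}}{\mathcal{M}_B \otimes \det L_d^{[p+1]}} \ = \ 0.
\]

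The main obstacle is establishing this $H^1$-vanishing in higher dimensions. When $\dim X \geq 3$ the Hilbert scheme $X^{[p+1]}$ is typically singular and may even be reducible, so Kodaira or Serre vanishing on $X^{[p+1]}$ cannot be invoked directly. My plan is to pass to a resolution of singularities -- or, as in Voisin's work, to a suitable model mapping to the symmetric product $X^{(p+1)}$ -- and argue there, exploiting that $\det L_d^{[p+1]}$ acquires unbounded positivity in the asymptotic regime $d \to \infty$ while $\mathcal{M}_B$ stays fixed. Verifying the requisite positivity of $\det L_d^{[p+1]}$ uniformly across all components of the chosen model, and tracking the kernel $\mathcal{M}_B$ through the resolution, is where the bulk of the technical work will lie.
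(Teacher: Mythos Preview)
Your overall plan---reinterpret $K_{p,1}$ as the cokernel of a multiplication map on a parameter space for $(p+1)$ points, observe that the jet-amplitude hypothesis makes an evaluation map of sheaves surjective, and then kill the $H^1$ of the kernel by Serre vanishing---is exactly the paper's strategy. The gap is the choice of parameter space. The Danila--Scala--Voisin identification you invoke as ``standard'' is established only for curves and surfaces; when $\dim X \ge 3$ the Hilbert scheme $X^{[p+1]}$ is in general singular and reducible, and it is not known that $H^0\big(X^{[p+1]}, B^{[p+1]}\otimes\det L^{[p+1]}\big)$ computes the intended Koszul group at all (extra components may contribute spurious sections). Your proposed fix---pass to a resolution---does not obviously help: on a resolution the pullback of $\det L_d^{[p+1]}$ is only big and nef rather than ample, so Serre vanishing is no longer available, and you would still owe a proof that the resolved cohomology recovers $K_{p,1}$. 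The ``bulk of technical work'' you defer is, as far as anyone knows, genuinely open.

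The paper's key move is to abandon the Hilbert scheme and work on the Cartesian product $X^{p+1}$, which is smooth in every dimension. A union $Z = \Delta_{0,1}\cup\cdots\cup\Delta_{0,p+1}$ of pairwise diagonals in $X\times X^{p+1}$ plays the role of the universal family; pushing $q^*B$ forward along $\sigma:Z\to X^{p+1}$ yields a torsion-free sheaf $\mathcal{E}_B$ whose fibre at $(x_1,\ldots,x_{p+1})$ is $H^0\big(B\otimes\OO_X/\frakm_1\cdots\frakm_{p+1}\big)$. Surjectivity of the evaluation $H^0(B)\otimes\OO_{X^{p+1}}\to\mathcal{E}_B$ is then \emph{precisely} $p$-jet very ampleness (not merely $p$-very ampleness---this is where the jet condition enters naturally). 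The twist $L_d^{\boxtimes(p+1)}$ is transparently ample on the smooth variety $X^{p+1}$ and grows linearly with $d$, so Serre vanishing applies in one line. The price is that Green's restriction criterion on the Cartesian product is only \emph{sufficient} for $K_{p,1}=0$ (the Koszul group appears as a direct summand of the relevant cokernel, not the whole thing), but that is all the vanishing theorem requires.
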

\noi The statement was conjectured in \cite[Conjecture 2.4]{EL2}, where the case $\dim X = 1$ was established. 

It is not clear whether one should expect that  $p$-jet amplitude is  {equivalent} to the vanishing of $K_{p,1}(X, B; L_d)$ for $d \gg 0$. However we prove:
\begin{theoremalpha} \label{Non.Van.Prop.Intro}
Suppose that there is a \textnormal{reduced} $(p+1)$-cycle $w$ on $X$ that fails to impose independent conditions on $H^0(X,B)$. Then
\[ K_{p,1}(X, B; L_d) \ \ne \ 0 \ \ \text{for \ } d \gg 0. \]
\end{theoremalpha}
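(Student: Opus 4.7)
The plan is to identify $K_{p,1}(X, B; L_d)$ with a cohomology of an exterior power of the kernel bundle, and then exhibit an explicit non-zero class coming from the degeneracy at $w$. Set $V = H^0(X, L_d)$ and let $M_{L_d}$ be the kernel bundle defined by $0 \to M_{L_d} \to V \otimes \OO_X \to L_d \to 0$. For $d \gg 0$, taking cohomology of
\[
0 \to \wedge^{p+1} M_{L_d} \otimes B \to \wedge^{p+1} V \otimes B \to \wedge^p M_{L_d} \otimes B \otimes L_d \to 0
\]
and applying Serre vanishing to the lower terms, one identifies
\[
K_{p,1}(X, B; L_d) \ \cong \ \ker\Bigl( H^1(X, \wedge^{p+1} M_{L_d} \otimes B) \to \wedge^{p+1} V \otimes H^1(X, B) \Bigr).
\]
So it suffices to produce a non-zero class on the left-hand side whose image on the right vanishes.

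To construct such a class, the hypothesis supplies a non-trivial relation $\sum c_i \operatorname{ev}_{x_i} \equiv 0$ on $H^0(X, B)$ (relative to fixed trivializations of $B$ at each $x_i$). For $d \gg 0$, $L_d$ separates the $(p+1)$-point set $w$, so we may choose sections $s_0, \ldots, s_p \in V$ with $s_i(x_j) = \delta_{ij}$. Since $M_{L_d}|_{x_i}$ is precisely the hyperplane of sections of $L_d$ vanishing at $x_i$, each decomposable wedge $\omega_i = s_0 \wedge \ldots \wedge \widehat{s_i} \wedge \ldots \wedge s_p$ lies in $\wedge^p M_{L_d}|_{x_i}$. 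Weighting the $\omega_i$ by the coefficients $c_i$ (via the chosen trivializations of $B|_{x_i}$ and the elements $s_i \in L_d|_{x_i}$), assemble a local class $\Xi$ in $\bigoplus_i (\wedge^p M_{L_d} \otimes B \otimes L_d)|_{x_i}$. The failure-of-independent-conditions hypothesis is exactly what causes $\Xi$ not to be the restriction to $w$ of any section coming from $\wedge^{p+1} V \otimes H^0(X, B)$ via the map in the cohomology sequence above. Passing through the connecting map then deposits a non-zero class in the cokernel, i.e.\ in $K_{p,1}(X, B; L_d)$.

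The main obstacle will be proving this final non-triviality step: that the specific local class $\Xi$ assembled from the $c_i$ and $\omega_i$ genuinely fails to lift from $\wedge^{p+1} V \otimes H^0(X, B)$. This should reduce to a combinatorial analysis in the exterior algebra on $V|_w$, in which the non-trivial relation among the $\operatorname{ev}_{x_i}$ enters essentially: any lift would provide a global compatibility at all $x_i$ that the failure of evaluation rules out. A subsidiary issue, present when $H^1(X, B) \neq 0$, is to confirm that the resulting $H^1$-class maps to zero in $\wedge^{p+1} V \otimes H^1(X, B)$; this should follow from the purely local nature of the construction at $w$ together with asymptotic vanishing as $d \to \infty$.
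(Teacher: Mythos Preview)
Your identification of $K_{p,1}(X,B;L_d)$ with the cokernel of $\Lambda^{p+1}V\otimes H^0(B)\to H^0(X,\Lambda^p M_{L_d}\otimes B\otimes L_d)$ is correct (and in fact holds without any auxiliary Serre vanishing), and the instinct to build a witness from the degeneracy at $w$ is sound. But there is a genuine gap at the step you gloss as ``passing through the connecting map.'' Your class $\Xi$ lives only in the fibres $\big(\Lambda^p M_{L_d}\otimes B\otimes L_d\big)\big|_w$, whereas the connecting homomorphism you mean to invoke has domain $H^0\big(X,\Lambda^p M_{L_d}\otimes B\otimes L_d\big)$. To get an element of $K_{p,1}$ you must first lift $\Xi$ to a \emph{global} section, i.e.\ show that the restriction
\[
H^0\big(X,\Lambda^p M_{L_d}\otimes B\otimes L_d\big)\ \lra\ \big(\Lambda^p M_{L_d}\otimes B\otimes L_d\big)\big|_w
\]
hits $\Xi$. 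This is \emph{not} Serre vanishing: the bundle $\Lambda^p M_{L_d}$ itself changes with $d$ (its rank grows without bound), so there is no fixed sheaf being twisted by an increasingly positive line bundle. You have in fact misidentified the main obstacle. The combinatorial claim that $\Xi$ fails to come from $\Lambda^{p+1}V\otimes H^0(B)$ can be arranged --- though your particular weighting by the $c_i$ may not work: for $p=1$ with $c_0=c_1$ one checks that $s_0\wedge s_1\otimes b$ with $b(x_0)=-1$, $b(x_1)=1$ hits your $\Xi$ exactly; a safe choice is $\Xi_i=(-1)^i\bar c_i\,\omega_i$. But the globalization step is the real difficulty, and you have not addressed it.

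The paper circumvents precisely this problem by passing to $\Sym^{p+1}(X)$. Using an equivariant description, $K_{p,1}(X,B;L_d)$ is realized as the cokernel on global sections of a map of sheaves on the symmetric product, twisted by the rank-one sheaf $\NNN(L_d)$; the key identity $\NNN(L\otimes A)=\NNN(L)\otimes\SSS(A)$ with $\SSS(A)$ an ample line bundle means the dependence on $d$ is absorbed into an honest ample twist of a \emph{fixed} sheaf. A reduced cycle $w$ failing to impose independent conditions gives a point in the support of the relevant cokernel sheaf, and then genuine Serre vanishing forces the cokernel on sections to be nonzero.
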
  
\noi In general, the proof of Theorem \ref{Main.Thm.A} will show that if $H^1(X,B) = 0$, then the jet amplitude hypothesis on $B$ is equivalent when $d \gg 0$ to the vanishing of a group that contains $K_{p,1}(X,B; L_d)$ as a subspace (Remark \ref{Tensor.Vanishing.Equivalence}). 

When $B = K_X$ is the canonical bundle of $X$, Theorem A translates under a mild additional hypothesis into a statement involving the syzygies of $L_d$ itself. 
\begin{corollaryalpha} \label{Corollary.Main.Result}
Assume that $H^i(X, \OO_X) = 0$ for $0 < i < n$, or equivalently that $X \subseteq \PP^{r_d}$ is projectively Cohen-Macaulay for $d \gg 0$. 
\begin{enumerate}
\item[(i).] The canonical bundle $K_X$ of $X$ is very ample if and only if
\[  K_{r_d-n-1,n}(X;L_d) \ = \ 0 \ \text{ for } \ d \gg 0. \]
\vskip 5 pt
\item[(ii).] If $K_X$ is $p$-jet very ample, then
\[ K_{r_d -n -p,n}(X;L_d) \ = \ 0  \ \text{ for } \ d \gg 0. \]
\end{enumerate}
\end{corollaryalpha}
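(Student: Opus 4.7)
The plan is to derive both parts of the corollary from Theorems~\ref{Main.Thm.A} and~\ref{Non.Van.Prop.Intro} by invoking classical Serre duality for Koszul cohomology in the projectively Cohen--Macaulay setting. I first check the parenthetical equivalence: projective CM of $X \subseteq \PP^{r_d}$ amounts to $H^i(X, L_d^m) = 0$ for all $m \in \ZZ$ and $0 < i < n$, and for $d \gg 0$ Kodaira vanishing dispatches the cases $m \geq 1$ and, via Serre duality, $m \leq -1$, leaving only $m = 0$, which is the stated hypothesis.

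Granted the CM condition, I invoke the standard duality
\[ K_{p, q}(X; L_d) \; \cong \; K_{r_d - n - p, \, n + 1 - q}(X, K_X; L_d)^*, \]
obtained by applying $\textnormal{Ext}^{r_d - n}_S(-, \omega_S)$ to the minimal free resolution of $R = R(X; L_d)$ and identifying $\textnormal{Ext}^{r_d - n}_S(R, \omega_S)$ with the canonical module $R(X, K_X; L_d)$ up to shift. Specializing to $q = n$ gives
\[ K_{r_d - n - p, \, n}(X; L_d) \; \cong \; K_{p, 1}(X, K_X; L_d)^*. \]
Part~(ii) is then immediate: Theorem~\ref{Main.Thm.A} applied to $B = K_X$ gives $K_{p, 1}(X, K_X; L_d) = 0$ for $d \gg 0$ whenever $K_X$ is $p$-jet very ample. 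The ``only if'' direction of~(i) is the case $p = 1$, once one notes that $1$-jet very ampleness coincides with ordinary very ampleness: the surjectivity conditions at $w = x_1 + x_2$ and $w = 2x$ together recover separation of points and of tangent vectors.

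For the ``if'' direction of~(i), I argue by contrapositive. If $K_X$ is not very ample, then either some reduced 2-cycle $w = x_1 + x_2$ fails to impose independent conditions on $H^0(X, K_X)$, in which case Theorem~\ref{Non.Van.Prop.Intro} directly gives $K_{1, 1}(X, K_X; L_d) \ne 0$ for $d \gg 0$; or $K_X$ separates distinct points but fails to separate some tangent vector at a point $x$. In the second case, if $x$ is a base point of $K_X$ then any reduced $w = x + y$ also fails and Theorem~\ref{Non.Van.Prop.Intro} again applies; otherwise one needs a mild extension of Theorem~\ref{Non.Van.Prop.Intro} to the non-reduced cycle $w = 2x$, which I expect to follow by adapting the proof in the body of the paper. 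This tangent-vector subcase is where I anticipate the main technical subtlety; modulo it, the corollary is a formal consequence of the duality above and the two main theorems.
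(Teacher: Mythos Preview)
Your approach via Koszul--Serre duality is exactly the one the paper takes, and your treatment of part~(ii) and of the ``only if'' direction of~(i) is correct and essentially identical to the paper's argument: under the CM hypothesis one has $K_{r_d-n-p,n}(X;L_d) \cong K_{p,1}(X,K_X;L_d)^*$, and Theorem~\ref{Main.Thm.A} with $B = K_X$ handles both.

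The gap you yourself flag in the ``if'' direction of~(i) is genuine, and more serious than you suggest. The case where $K_X$ is globally generated, separates all pairs of distinct points, but fails to separate a tangent direction at some $x$ really is not covered by Theorem~\ref{Non.Van.Prop.Intro}, and your proposed remedy---extending Theorem~\ref{Non.Van.Prop.Intro} to the non-reduced cycle $2x$---is not a routine adaptation. The proof of Theorem~\ref{Non.Van.Prop.Intro} uses reducedness of $w$ in an essential way: one needs the comparison map $s$ in diagram~\eqref{Pushdown.Diagram} to be an isomorphism near $w$, and this relies on $w$ lying in the smooth locus of $\Sym^{p+1}(X)$. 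For $\dim X \ge 2$ the point $2x$ is singular in $\Sym^2(X)$, and the argument breaks down. Indeed, the paper explicitly remarks after the proof of Theorem~\ref{Non-Vanishing.Prop} that whether failure of $p$-jet very ampleness suffices for non-vanishing is open and ``seems somewhat difficult to analyze.''

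The paper avoids this issue entirely by taking a different route for the ``if'' direction: rather than invoking Theorem~\ref{Non.Van.Prop.Intro}, it cites an elementary argument (as in \cite[Theorem~1.1]{EHU}) showing directly that if $B$ fails to be very ample then $K_{1,1}(X,B;L_d) \ne 0$ for $d \gg 0$. This is a statement about the first syzygy module only, and does not require the machinery on the symmetric product. So the fix is not to strengthen Theorem~\ref{Non.Van.Prop.Intro}, but to replace it here by a simpler direct argument tailored to $p=1$.
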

\noi When $n = \dim X =1$, this (together with Theorem \ref{Non.Van.Prop.Intro}) implies that $K_{r_d - c,1}(X; L_d) \ne 0$ for $d \gg 0$ if and only if $X$ admits a branched covering $X \lra \PP^1$ of degree $\le c$, which is the statement of the gonality conjecture established in \cite{EL2}.

The proof of Theorem \ref{Main.Thm.A}  occupies \S 1. It follows very closely the strategy of \cite{EL2}, which in turn was inspired by the ideas of Voisin in    \cite{Voisin1} and \cite{Voisin2}. However instead of working on a Hilbert scheme or symmetric product, we work on a Cartesian product of $X$, using an idea that goes back in a general way to Green \cite{Kosz2}. For the benefit of non-experts, we outline now the approach in some detail  in the toy case $p = 0$.\footnote{This was in fact the train of thought that led us to the arguments here and in \cite{EL2}.}

Keeping notation as above, it follows from the definition that $K_{0,1}(X, B; L_d) = 0 $ if and only if the multiplication map
\[  \HH{0}{X}{B} \otimes \HH{0}{X}{L_d} \lra \HH{0}{X}{B \otimes L_d} \tag{*}\]
is surjective: in fact, $K_{0,1}$ is its cokernel. A classical way to study such maps is to pass to the product $X \times X$ and then restrict to the diagonal. Specifically, (*) is identified with the homomorphism
\[ \HH{0}{X\times X}{\pro_1^*B \otimes \pro_2^* L_d } \lra \HH{0}{X\times X}{\pro_1^*B \otimes \pro_2^* L_d \otimes \OO_\Delta }   \tag{**} \]
 arising from this restriction. Thus the vanishing $K_{0,1}(X, B; L_d) = 0$ is implied by the surjectivity of (**). Green observed in \cite{Kosz2} that there is a similar way to tackle the $K_{p,1}$ for $p \ge 1$: one works on the $(p+2)$-fold product $X^{p+2} = X \times X^{p+1}$ and restricts to a suitable union of pairwise diagonals (Proposition \ref{Lemma.Surjectivity.Sufficient}). This is explained in \S1, and forms the starting point of our argument. Although   not strictly necessary we give a new proof of Green's result here  that clarifies its relation to other approaches.

There remains the issue of actually proving the surjectivity of (**) for $d \gg 0$ provided that $B$ is $0$-jet very ample, i.e.~  globally generated. For this one starts with the restriction
\[   \pro_1^* B \lra \pro_1^* B \otimes \OO_\Delta \]
of sheaves on $X \times X$ and pushes down to $X$ via $\pro_2$. There results a map of vector bundles
\[  \ev_B:  H^0(X, B) \otimes_\CC \OO_X \lra B \]
on $X$ which is given by evaluation of sections of $B$. Note that $\ev_B$ is surjective as a map of bundles if and only if $B$ is globally generated. The surjectivity in (*) or (**) is then equivalent to the surjectivity on global sections of the map 
\[
H^0(X,B) \otimes L_d \lra B \otimes L_d 
\]
obtained from twisting $\ev_B$ by $L_d$.

Suppose now that $B$ is $0$-jet very ample. The setting $M_B = \ker (\ev_B)$, we get an exact sequence
\[  0 \lra M_B \lra H^0(X, B) \otimes_\CC \OO_X \lra B \lra 0 \]
of sheaves on $X$. Serre vanishing implies that 
\[   \HH{1}{X}{M_B \otimes L_d} \ = \ 0 \]
for $d \gg 0$, and by what we have just said this means that $K_{0,1}(X, B; L_d) = 0$. The proof of Theorem \ref{Main.Thm.A} in general proceeds along analogous lines. We construct a torsion-free sheaf $\mathcal{E}_B = \mathcal{E}_{p+1, B}$ of rank $p+1$ on $X^{p+1}$ whose fibre at $(x_1, \ldots, x_{p+1})$ is identified with
\[   \HH{0}{X}{B \otimes \OO_X/\frakm_1 \cdot \ldots \cdot \frakm_{p+1}}, \]
where $\frakm_i \subseteq \OO_X$ is the ideal sheaf of $x_i$. This comes with an evaluation map
\[  \ev_{p+1,B} : \HH{0}{X}{B}\otimes_\CC \OO_{X^{p+1}} \lra \mathcal{E}_B \]
which is surjective (as a map of sheaves)  if and only if $B$ is $p$-jet very ample.\footnote{Note that the fibre of $B$ at a point $x \in X$ is identified with $B \otimes \OO_X/ \frakm_x$, i.e.~ $\mathcal{E}_{1, B} = B$.} \ Green's criterion for the vanishing of $K_{p,1}(X, B; L_d)$ turns out to be equivalent  to the surjectivity of  the map on global sections resulting from twisting $\ev_{p+1,B}$ by a suitable ample divisor $\mathcal{N}_d$ on $X^{p+1}$ deduced from $L_d$, and this again follows from Serre vanishing. 

Returning to the case $p = 0$, the argument  just sketched actually proves more. Namely for arbitrary $B$ one has an exact sequence
\[  0 \lra \ker (\ev_B) \lra H^0(X, B) \otimes_\CC \OO_X \lra B \lra \coker (\ev_B) \lra 0, \]
and so Serre vanishing shows conversely that if $B$ is not $0$-jet very ample then 
\[ K_{p,1}(X, B; L_d) \ = \ \HH{0}{X}{\coker (\ev_B) \otimes L_d} \ \ne \ 0\tag{***} \]for $d \gg 0$. Unfortunately this does not generalize when $p \ge 1$ because the computations on $X^{p+1}$ lead to groups that contain $K_{p,1}(X,B;L_d)$ as  summands, but may contain other terms as well. (Said differently, Green's criterion is sufficient but not necessary for the vanishing of $K_{p,1}$.) To prove a non-vanishing statement such as Theorem \ref{Non.Van.Prop.Intro}, one needs a geometric interpretation of $K_{p,1}$ itself. Voisin \cite{Voisin1}, \cite{Voisin2} achieves this by working on a Hilbert scheme -- which has the advantage of being smooth when $\dim X =2$ -- while the third author \cite{Yang} passes in effect to the symmetric product.\footnote{Roughly speaking, one is picking out $K_{p,1}$ inside Green's construction as the space of invariants under a suitable action of the symmetric group.} We follow the latter approach for Theorem \ref{Non.Van.Prop.Intro}: we exhibit a sheaf on $\Sym^{p+1}(X)$ whose twisted global sections compute $K_{p,1}(X,B;L_d)$, and we show that it is non-zero provided that there is a reduced cycle that fails to impose independent conditions on $H^0(X,B)$. Then we can argue much as in the case $p = 0$ just described. This is the content of Section 2.

We are grateful to Claudiu Raicu and Bernd Sturmfels for valuable discussions. We particularly profited from conversations with B. Purnaprajna, who suggested to us that one could work on a Cartesian rather than a symmetric product to establish the vanishing we wanted.

\section{Proof of Theorem \ref{Main.Thm.A}} This section is devoted to the proof of Theorem \ref{Main.Thm.A} from the Introduction.

We start by describing the set-up. As above $X$ is a smooth complex projective variety of dimension $n$, and we consider the $(p+2)$-fold product
\[  Y \ =_{\text{def}} \ X \times X^{p+1} \]
of $X$ with itself. For $0 \le i < j \le p+1$  denote by 
\[   \pi_{i,j} : Y \lra X \times X \]
the projection of $Y$ onto the product of the $i$ and $j$ factors. We write $\Delta_{i,j} \subseteq Y$ for the pull-back of the diagonal $\Delta \subseteq X \times X$ under $\pi_{i,j}$, so that $\Delta_{i,j}$ consists of those points $y = (x_0, x_1, \ldots, x_{p+1}) \in Y$ with $x_i = x_j$.

The basic idea -- which goes back to Green \cite{Kosz2} and has been used repeatedly since (eg \cite{Inamdar}, \cite{BEL}, \cite{LPP}, \cite{HwangTo}, \cite{Yang}) -- is to relate syzygies on $X$ to a suitable union of pairwise diagonals on $Y$. Specifically, let 
\begin{equation} \label{Def.of.Z}  Z \ = \ Z_{p+1} \ =  \Delta_{0,1} \cup \ldots \cup \Delta_{0,p+1} \ \subseteq \ X \times X^{p+1} \end{equation}
be the union of the indicated pairwise diagonals, considered as a reduced subscheme. We denote by
\begin{equation} \label{Z.Projection.Notation}   q : Z \lra X \ \ , \ \ \sigma : Z \lra X^{p+1} \end{equation}
the indicated projections. 

The importance of this construction for us is given by:
\begin{proposition} \label{Lemma.Surjectivity.Sufficient}
Let $L$ be a base-point-free and $B$ an arbitrary line bundle on $X$, and assume $($for simplicity$)$ that $H^1(X, L) = 0$. If the restriction homomorphism 
\begin{equation}  \label{Restriction.Map.to.Z}  \HH{0}{Y}{B \boxtimes L^{\boxtimes p+1}} \lra \HH{0}{Y}{(B \boxtimes L^{\boxtimes p+1}) \mid  Z}  \end{equation}
is surjective, then
\[ K_{p,1}(X, B;L) \ = \ 0. \]
\end{proposition}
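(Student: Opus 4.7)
The plan is to exploit the $S_{p+1}$-action on $Y = X \times X^{p+1}$ that permutes the final $p+1$ factors. Both the sheaf $B \boxtimes L^{\boxtimes(p+1)}$ and the subscheme $Z$ are $S_{p+1}$-equivariant, so the restriction homomorphism \eqref{Restriction.Map.to.Z} is equivariant, and its surjectivity descends to every isotypic summand -- in particular to the sign summand. I would identify the sign-isotypic parts on both sides with the source and the kernel-of-next-arrow of the Koszul complex
\[
\wedge^{p+1} \HH{0}{X}{L} \otimes \HH{0}{X}{B} \stackrel{d_1}{\lra} \wedge^p \HH{0}{X}{L} \otimes \HH{0}{X}{B \otimes L} \stackrel{d_2}{\lra} \wedge^{p-1} \HH{0}{X}{L} \otimes \HH{0}{X}{B \otimes L^2}
\]
that computes $K_{p,1}(X, B; L) = \ker d_2/\mathrm{im}\, d_1$. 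Once this identification is in place, surjectivity on sign-parts will force $\mathrm{im}\, d_1 = \ker d_2$, i.e.\ $K_{p,1}(X, B; L) = 0$.

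On the source, Künneth yields $\HH{0}{Y}{B \boxtimes L^{\boxtimes(p+1)}} = \HH{0}{X}{B} \otimes \HH{0}{X}{L}^{\otimes(p+1)}$, whose sign-isotypic part under $S_{p+1}$ is manifestly $\HH{0}{X}{B} \otimes \wedge^{p+1} \HH{0}{X}{L}$. On the target, use that $Z$ is the reduced union of the sections $\Delta_{0,i}$, so that $\mathcal{I}_Z = \bigcap_i \mathcal{I}_{\Delta_{0,i}}$, giving the left-exact Mayer--Vietoris sequence
\[
0 \lra (B \boxtimes L^{\boxtimes(p+1)})|_Z \lra \bigoplus_i (B \boxtimes L^{\boxtimes(p+1)})|_{\Delta_{0,i}} \lra \bigoplus_{i<j} (B \boxtimes L^{\boxtimes(p+1)})|_{\Delta_{0,i} \cap \Delta_{0,j}}.
\]
Each $\Delta_{0,i}$ is a section of $\sigma \colon Y \to X^{p+1}$ on which the sheaf restriction is $L^{\boxtimes(i-1)} \boxtimes (B \otimes L) \boxtimes L^{\boxtimes(p+1-i)}$, and each $\Delta_{0,i} \cap \Delta_{0,j}$ projects isomorphically to the partial diagonal $\{x_i = x_j\} \subset X^{p+1}$, contributing a $B \otimes L^2$ factor at the collapsed position. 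Künneth therefore gives
\[
\HH{0}{\Delta_{0,i}}{\ldots} = \HH{0}{X}{L}^{\otimes(i-1)} \otimes \HH{0}{X}{B \otimes L} \otimes \HH{0}{X}{L}^{\otimes(p+1-i)},
\]
\[
\HH{0}{\Delta_{0,i} \cap \Delta_{0,j}}{\ldots} = \HH{0}{X}{B \otimes L^2} \otimes \HH{0}{X}{L}^{\otimes(p-1)}.
\]

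A Frobenius reciprocity computation then identifies the sign-isotypic parts of these two direct sums as $\wedge^p \HH{0}{X}{L} \otimes \HH{0}{X}{B \otimes L}$ and $\wedge^{p-1} \HH{0}{X}{L} \otimes \HH{0}{X}{B \otimes L^2}$ respectively, so that $[\HH{0}{Z}{(B \boxtimes L^{\boxtimes(p+1)})|_Z}]^{\mathrm{sgn}} = \ker d_2$. Unravelling the multiplications $\HH{0}{X}{B} \otimes \HH{0}{X}{L} \to \HH{0}{X}{B \otimes L}$ and $\HH{0}{X}{B \otimes L} \otimes \HH{0}{X}{L} \to \HH{0}{X}{B \otimes L^2}$ that appear in the restriction and Čech differentials shows that on sign-parts they transport to $d_1$ and $d_2$ respectively. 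The main obstacle is precisely this bookkeeping: one must track the $S_{p+1}$-action through Frobenius reciprocity with the correct sign conventions (especially for the Čech pairs, where antisymmetry under swapping $i,j$ conspires with the sgn-character) and confirm that the Čech differential becomes the Koszul differential on the sign-isotypic summand. The mild hypothesis $H^1(X, L) = 0$ is not essential to the sign-part argument itself but keeps the Künneth decompositions concentrated in degree zero, preventing spurious higher-cohomology contributions that would otherwise complicate the identification of the target with $\ker d_2$.
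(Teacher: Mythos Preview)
Your approach is correct but genuinely different from the paper's. The paper proceeds by pushing the ideal sequence of $Z$ down to $X$ via the first projection $q$: using Lemma~\ref{Ideal.Of.Z.is.Product} and K\"unneth (this is where $H^1(X,L)=0$ enters, to kill an $R^1q_*$), one finds
\[
q_*\big(I_{Z/Y}\otimes B\boxtimes L^{\boxtimes p+1}\big)\;=\;(\otimes^{p+1}M_L)\otimes B,
\]
so surjectivity of \eqref{Restriction.Map.to.Z} becomes exactness on global sections of
\[
0\lra(\otimes^{p+1}M_L)\otimes B\lra(\otimes^{p+1}H^0(L))\otimes B\lra \mathcal{N}\otimes B\lra 0,
\]
and then the characteristic-zero splitting $\Lambda^{p+1}M_L\subset\otimes^{p+1}M_L$ yields the Koszul exactness that kills $K_{p,1}$.

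You instead stay on $Y$, resolve $\OO_Z$ by the \v{C}ech/Mayer--Vietoris sequence of the components $\Delta_{0,i}$, and extract the sign-isotypic summand directly, identifying it with the Koszul complex via Frobenius reciprocity. This is essentially the argument of \cite{Yang}, which the paper cites as an existing proof before giving its own ``direct'' one. What your route buys is a transparent combinatorial link between the \v{C}ech data on $Z$ and the Koszul differentials, and it meshes naturally with the equivariant viewpoint used in Section~2; what the paper's route buys is brevity and an immediate connection to the $M_L$ formalism, including the side result (Remark~\ref{Tensor.Product.M}) that the surjectivity is equivalent to $H^1\big(X,(\otimes^{p+1}M_L)\otimes B\big)=0$. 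One small correction: your remark about $H^1(X,L)=0$ is slightly off-target --- K\"unneth for $H^0$ of an exterior product never needs it. In your argument the hypothesis is genuinely unused; in the paper's argument it is what makes the pushforward identification with $\otimes^{p+1}M_L$ clean.
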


The Proposition was essentially established for instance in \cite{Yang}, but it is instructive to give a direct argument. We start with a Lemma that will also be useful later:

\begin{lemma} \label{Ideal.Of.Z.is.Product}
Writing $I_{Z/Y}$ for the ideal sheaf of $Z$ in $Y$, one has
\begin{align*}
I_{Z/Y} \ &= \ I_{\Delta_{0,1} / Y} \cdot I_{\Delta_{0,2} / Y} \cdot \ldots \cdot I_{\Delta_{0,p+1}/Y} \\ &= \ \bigotimes_{j = 1}^{p+1} \, \pi_{0,j}^* I_{\Delta/X\times X}.
\end{align*}
\end{lemma}
\begin{proof} [Sketch of Proof] This is implicit in 
\cite[Theorem 1.3]{LiLi}, but does not appear there explicitly so we very briefly indicate an argument. The statement is \'etale local, so we can assume $X = \mathbf{A}^n$. By looking at a suitable subtraction map, as in \cite[(1-3)]{LPP}, it then suffices to prove the analogous statement for $Y = {X}^{p+1}$ with $Z$ being the union of the ``coordinate planes" \[ L_i \ = \ \{ (x_1, \ldots , x_i , \ldots,x_{p+1} ) \mid x_i = 0\in \mathbf{A}^n \} \ = \  \{ \mathbf{A}^n \} \times \ldots \times \{ 0 \} \times \ldots \times \{ \mathbf{A}^n \}\ \subseteq \ Y  \]  
$(1 \le i \le p+1)$. For this one can proceed by induction on $p$, writing out explicitly the equations defining each $L_i$.    \end{proof}

\begin{remark}
This is the essential place where we use the hypothesis that $X$ is smooth. We do not know whether the statement of the Lemma remains true for singular  $X$. 
\end{remark}

We now turn to the proof of the Propositon:
\begin{proof}[Proof of Proposition \ref{Lemma.Surjectivity.Sufficient}]
To begin with, it is well known (cf \cite{Kosz1}) that  $K_{p,1}(X,B;L)$ is the cohomology of the Koszul-type complex
\[
\Lambda^{p+1} H^0(L) \otimes H^0(B) \lra \Lambda^p H^0(L) \otimes H^0(B+L) \lra \Lambda^{p-1} H^0(L) \otimes H^0(B + 2L).  \] Moreover this cohomology can in turn be interpreted geometrically in terms of the vector bundle $M_L$ on $X$ defined (as in the Introduction) as the kernel of the evaluation map
\[  \ev_L : H^0(L) \otimes_\CC \OO_X \lra L. \]
Specifically,  $M_L$ sits in an exact sequence of vector bundles
\[  0 \lra M_L \lra H^0(L)\otimes \OO_X \lra L \lra 0  \tag{*} \]
on $X$, 
and then   $K_{p,1}(X,B;L) = 0$ if and only if the  sequence
\[  \label{eqn**} 0 \lra \Lambda^{p+1} M_L \otimes B \lra \Lambda^{p+1}H^0(L) \otimes B \lra \Lambda^p M_L \otimes L \otimes B \lra 0 \tag{**}\]
deduced from (*) is exact on global sections. (See for instance \cite[Lemma 1.10]{GL} or \cite{VBT}.)

On the other hand, consider on $X \times X$ the exact sequence
\[ 0 \lra I_\Delta \otimes \pro_2^*L \lra \pro_2^* L \lra L \otimes \OO_\Delta \lra 0.  \]
As in the Introduction, this  pushes down via $\pro_{1}$ to (*). Therefore one finds from Lemma \ref{Ideal.Of.Z.is.Product} and the K\"unneth formula that
\[   q_* \Big(I_{Z/Y} \otimes  B \boxtimes L^{\boxtimes p+1} \Big ) \ = \ \big( \otimes^{p+1} M_L\big) \otimes B, \]
and moreover the $R^1 q_*$ vanishes thanks to our hypothesis that $H^1(L) = 0$.\footnote{The K\"unneth theorem in play here is the following:  let $V_1 \lra S, \ldots,   V_r \lra S$ be mappings of schemes over a field, and suppose that ${F_i}$ is a quasi-coherent sheaf on $V_i$ that is flat over $S$. Write
\[ p_i : V_1 \times_S   \ldots \times_S V_r \lra V_i \  \ , \ \  p: V_1 \times_S   \ldots \times_S V_r \lra S \]
for the natural maps. Then
\[   
p_* \Big( p_1^* F_1 \otimes \ldots \otimes p_r^* F_r \Big) \ = \ p_{1,*} F_1 \otimes \ldots \otimes p_{r,*} F_r \]
as sheaves on $S$, with an analogous K\"unneth-type computations of  the $R^jp_*$. See for instance \cite[Theorem 14]{Kempf} for a simple proof when $S$ is affine, and \cite[Theorem 6.7.8]{EGAIII2} in general.}

\vskip 10pt
Writing
\[ \mathcal  N \ = \ \coker \Big( \otimes^{p+1} M_L \lra \otimes^{p+1} H^0(L) \Big) , \]
it follows that 
\[   q_* \Big ( (B \boxtimes L^{\boxtimes p+1} ) \mid  Z  \Big)  = \mathcal{N} \otimes B, \]
and hence the surjectivity of \eqref{Restriction.Map.to.Z} is equivalent to asking that 
\[  0\lra  \otimes^{p+1} M_L  \otimes B\lra  \otimes^{p+1} H^0(L) \otimes B \lra \mathcal{N} \otimes B \lra 0 \tag{***} \]
be exact on global sections. But since we are in characteristic zero, the exact sequence (**) is a summand of this, and the Lemma follows. \end{proof}

\begin{remark} \label{Tensor.Product.M}
The argument just completed shows that if in addition $H^1(X, B) = 0$ then \eqref{Restriction.Map.to.Z} is surjective if and only if
\[
\HH{1}{X}{ (\otimes^{p+1} M_L) \otimes B } \ = \ 0. 
\]
\end{remark}

It remains to relate these considerations to the jet-amplitude of $B$. To this end, keeping notation as in \eqref{Z.Projection.Notation} set
\[  \EE_B \ = \ \EE_{p+1, B} \ = \ \sigma_* \big( q^* B \big). \]
This is a torsion-free sheaf of rank $p+1$ on $X^{p+1}$ (since it is the push forward of a line bundle under a finite mapping of degree $p+1$), and one has

\begin{lemma} \label{EB.Properties}
\begin{itemize}
\item[(i).] Fix a point \[ \xi \ = \ (x_1, \ldots, x_{p+1} ) \in X^{p+1},\] the $x_i$ being $($possibly non-distinct$)$ points of $X$, and denote by $\EE_B|\xi$ the fibre of $\EE_B$ at $\xi$. Then there is a natural identification
\[  \EE_B|\xi \ = \ \HH{0}{X}{B \otimes \OO_X / \frakm_1 \cdot \ldots \cdot \frakm_{p+1}}, \]
where $\frakm_i \subseteq \OO_X$ is the maximal ideal of $x_i$. 
\vskip 10pt
\item[(ii).] There is a canonical injection
\[   \HH{0}{X}{B} \hookrightarrow \HH{0}{X^{p+1}}{\EE_B}, \]
giving rise to a homomorphism
\begin{equation} \label{evB.Equation} \ev_B = \ev_{p+1,B} : \HH{0}{X}{B} \otimes_{\CC} \OO_{X^{p+1}} \lra \EE_B. \notag \end{equation}
of sheaves on $X^{p+1}$. Under the identification in $($i$)$, $\ev_B$ is given fiberwise by the natural map
\[  \HH{0}{X}{B} \lra \HH{0}{X}{B \otimes \OO_X / (\frakm_1 \cdot \ldots \cdot \frakm_{p+1})}. \]
\vskip10pt
\item[(iii).] The homomorphism \eqref{Restriction.Map.to.Z}  is identified with the map on global sections arising from the sheaf homomorphism \begin{equation} \label{Twisted.ev}
\HH{0}{X}{B} \otimes_{\CC} L^{\boxtimes p+1} \lra \EE_B \otimes L^{\boxtimes p+1}.
\end{equation}
on $X^{p+1}$ determined by twisting $\ev_B$ by $L^{\boxtimes p+1}$.
\vskip 10pt
\item[(iv).] The mapping $\ev_{p+1,B}$ is surjective as a homomorphism of sheaves on $X^{p+1}$ if and only if $B$ is $p$-jet very ample.

\end{itemize}
\end{lemma}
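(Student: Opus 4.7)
My plan is to unpack $\EE_B = \sigma_*(q^*B)$ using the description of $I_{Z/Y}$ furnished by Lemma \ref{Ideal.Of.Z.is.Product}, together with the fact that $\sigma : Z \to X^{p+1}$ is a finite (hence affine) morphism of degree $p+1$. All four parts should then reduce to standard pushforward identities, with the serious geometric input already absorbed into that lemma.

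For (i), I would first identify the scheme-theoretic fiber $\sigma^{-1}(\xi)$ at $\xi = (x_1, \ldots, x_{p+1})$. Viewing $X \times \{\xi\} \subseteq Y$ as a copy of $X$, this fiber is the subscheme of $X$ cut out by the restriction $I_{Z/Y}|_{X \times \{\xi\}}$. Lemma \ref{Ideal.Of.Z.is.Product} writes $I_{Z/Y} = \bigotimes_{j=1}^{p+1} \pi_{0,j}^* I_{\Delta/X \times X}$, and each factor pulls back to the maximal ideal $\frakm_j \subset \OO_X$ of $x_j$, so $\sigma^{-1}(\xi)$ is the subscheme of $X$ defined by $\frakm_1 \cdots \frakm_{p+1}$. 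Since $\sigma$ is affine, the base-change formula then yields
\[
\EE_B \otimes k(\xi) \ = \ H^0\big(\sigma^{-1}(\xi),\ q^*B|_{\sigma^{-1}(\xi)}\big) \ = \ H^0\big(X,\ B \otimes \OO_X/\frakm_1 \cdots \frakm_{p+1}\big),
\]
which is the desired identification.

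For (ii) and (iii), I would use that the pullback $q^* : H^0(X, B) \to H^0(Z, q^*B) = H^0(X^{p+1}, \EE_B)$ is injective because the restriction of $q$ to the component $\Delta_{0,1}$ is already a surjection onto $X$. The induced evaluation map $\ev_B$ then restricts fiberwise---via the identification in (i)---to the natural map $H^0(X, B) \to H^0(X, B \otimes \OO_X/\frakm_1 \cdots \frakm_{p+1})$. For (iii) I would combine the projection formula $\sigma_*\big(q^*B \otimes \sigma^*(L^{\boxtimes p+1})\big) = \EE_B \otimes L^{\boxtimes p+1}$ with the K\"unneth identification of $H^0(Y, B \boxtimes L^{\boxtimes p+1})$ to see that the restriction map \eqref{Restriction.Map.to.Z} becomes exactly the global-sections map of \eqref{Twisted.ev}.

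For (iv), surjectivity of the coherent-sheaf map $\ev_B$ is equivalent via Nakayama to fiberwise surjectivity at every $\xi \in X^{p+1}$; by (i) and (ii) this translates into the surjectivity of $H^0(X, B) \to H^0(X, B \otimes \OO_X/\frakm_1 \cdots \frakm_{p+1})$ for every tuple $(x_1, \ldots, x_{p+1})$. Since any effective zero-cycle $w = \sum a_i y_i$ of degree $p+1$ arises as such a tuple (by repeating $y_i$ with multiplicity $a_i$), this is exactly the $p$-jet very ampleness condition. The only step that is not purely formal is the fiber identification in (i), which is where Lemma \ref{Ideal.Of.Z.is.Product} (and hence the smoothness of $X$) really enters; the cleanliness of the rest essentially reflects the affineness of $\sigma$, and is the reason this approach works directly on $X^{p+1}$ rather than on a symmetric product or a Hilbert scheme.
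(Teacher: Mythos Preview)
Your proposal is correct and follows essentially the same route as the paper's proof: both arguments identify the scheme-theoretic fibre $\sigma^{-1}(\xi)$ via Lemma \ref{Ideal.Of.Z.is.Product} as the subscheme of $X$ cut out by $\frakm_1\cdots\frakm_{p+1}$, obtain (i) from this, build the injection in (ii) from $q^*$, and deduce (iv) from the fibrewise criterion for surjectivity. Your write-up is slightly more explicit in a few places---you name the affine base-change step in (i), invoke Nakayama in (iv), and spell out the projection formula plus K\"unneth for (iii)---whereas the paper simply says ``thanks to the projection formula'' and ``follows from the construction''; but there is no substantive difference in strategy.
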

\begin{proof}
For (i), consider the diagram
\begin{equation}   \label{Basic.Diagram}
\xymatrix{
Z   \ar[dr]_\sigma &\subseteq  &    X \times X^{p+1} \ar[dl]^{pr_2}   \\ &  X^{p+1} 
}
 \end{equation}
 and fix $\xi = (x_1, \ldots, x_{p+1} ) \in X^{p+1}$. The scheme-theoretic fibre $\sigma^{-1}(\xi)$ lives naturally as a subscheme of $X$, and  Lemma \ref{Ideal.Of.Z.is.Product} implies that
 it is in fact the scheme defined by the ideal sheaf
 \[ \frakm_1 \cdot \ldots \cdot \frakm_{p+1} \subseteq \OO_X. \]
 Therefore, thanks to the projection formula, the fibre $ \EE_B |\xi$ of $\EE_B$ at $\xi$ is identified with
  \[ \pro_{2,*} \Big( B \otimes \OO_X /(\frakm_1 \cdot \ldots \cdot \frakm_{p+1})\Big) \ = \ \HH{0}{X}{B \otimes \OO_X /(\frakm_1 \cdot \ldots \cdot \frakm_{p+1})}, \]
  as claimed. For (ii), note that in any event
  \[
  \HH{0}{X^{p+1}}{\EE_B} \ = \   \HH{0}{X^{p+1}}{\sigma_* q^* B} \ = \ \HH{0}{Z}{q^* B} .
  \]
On the other hand, each of the irreducible components of $Z$ maps via projection onto $X$, and this gives an inclusion
\[  q^* : \HH{0}{X}{B} \lra \HH{0}{Z}{q^* B} \ = \ \HH{0}{X \times X^{p+1}}{(\pro_1^* B)|Z}.\] 
It is evident from the construction that fiber by fibre $\ev_B$ is as described, and (iv) is then a consequence of the fact that a morphism of sheaves is surjective if and only it is so on each fibre.
Finally, statement (iii)  follows from the construction of $\EE_B$ and $\ev_B$.  \end{proof}

\begin{remark}
Using the resolution of $\OO_Z$ appearing in \cite[p. 4]{Yang}, one can show that in fact
\[  \HH{0}{X^{p+1}}{\EE_B} \ = \ \HH{0}{X}{B}. \]
However this isn't necessary for the argument.
\end{remark}

 \begin{remark}
 The reader familiar with \cite{EL2} or Voisin's Hilbert schematic approach to syzygies will recognize that $Z \lra X^{p+1}$ plays the role of the universal family over the Hilbert scheme, and that Lemma \ref{Lemma.Surjectivity.Sufficient} is the analogue of \cite[Lemma 1, p. 369]{Voisin1}. The sheaf $\EE_{p+1, B}$ plays the role of the vector bundle $E_{p+1, B}$ on the symmetric product appearing in \cite{EL2}.
 \end{remark}
 
 Just as in \cite{EL2}, the main result now follows immediately from Serre vanishing.
\begin{proof}[Proof of Theorem \ref{Main.Thm.A}] 
Assuming that $B$ is $p$-jet very ample, so that $\ev_{p+1, B}$ is surjective, let $\mathcal{M}_{p+1, B}$ denote its kernel:
\[  
0 \lra \mathcal{M}_{p+1, B} \lra H^0(B) \otimes \OO_{X^{p+1}} \lra \EE_B \lra 0. \]
To show that $K_{p,1}(X, B; L_d) = 0$ it suffices thanks to Proposition \ref{Lemma.Surjectivity.Sufficient} and its interpretation in  terms of \eqref{Twisted.ev} to prove that
\[
\HH{1}{X^{p+1}}{\mathcal{M}_{p+1,B}  \otimes L_d^{\boxtimes p+1}} \ = \ 0  
\]
for $d \gg 0$. But this follows immediately from Serre vanishing.\end{proof}

\begin{remark} \label{Tensor.Vanishing.Equivalence}
It follows from the argument just completed that if $L = L_d$ then the surjectivity in Lemma \ref{Lemma.Surjectivity.Sufficient} holds for $d \gg 0$ if and \textit{only if} $B$ is $p$-jet  very ample. In particular, in view of Lemma \ref{Tensor.Product.M} this means that if $H^1(X, B) = 0$ then the $p$-jet very amplitude of $B$ is equivalent to the vanishing
\[ \HH{1}{X}{ (\otimes^{p+1} M_{L_d}) \otimes B } \ = \ 0 \ \text{ for } \ d \gg 0.\]
\end{remark}

 \begin{proof}[Proof of Corollary \ref{Corollary.Main.Result}]
Under the stated hypothesis on $X$, the groups in question are Serre dual to $K_{p,1}(X, B; L_d)$ for $d \gg 0$ (cf \cite[\S 2]{Kosz1}). If $B$ fails to be very ample, then a simple argument as in \cite[Theorem 1.1]{EHU}   shows that $K_{1,1}(X, B; L_d) \ne 0$ for $d \gg 0$, and therefore the Corollary follows from the main theorem. 
\end{proof}

\begin{remark} In the case of curves,  Rathmann \cite{Rathmann} has given a very interesting argument that leads to an essentially optimal effective version of the asymptotic results of \cite{EL2}: in fact, it suffices that $H^1(L) = H^1(L-B) =0$.  In this spirit, it would be very interesting to find an effective estimate for the positivity of $L$  to guarantee the vanishing of $K_{p,1}(X, B;L)$ when $B$ is $p$-jet  very ample. \end{remark}

\begin{remark} \textbf{(Other Koszul cohomology groups).} \label{Other.Kpq.Remark} To conclude this section, we briefly summarize what is known about the groups $K_{p,q}(X, B; L_d)$ for $q \ne 1$. Specifically, fix $B$. Then for $d \gg 0$:
 \begin{enumerate}
\item[(i).] $K_{p,q}(X, B;L_d)  = 0 $ for $q \ge n+2$.
\vskip 5pt
\item[(ii).]  One has 
\begin{align*}
K_{p,0}(X, B; L_d) \ne 0 \ &\Longleftrightarrow \ 0 \le p \le r(B), \text{ and}\\ K_{p,n+1}(X, B;L_d) \ne 0 \ &\Longleftrightarrow \ r_d - n - r(K_X -B) \le p \le r_d -n.
\end{align*}
\vskip 5pt
\item[(iii).] If $q \ge 2$, then $K_{p,q}(X, B; L_d) = 0  \, \text{ when } \, p \le O(d).$
\vskip 5pt
\item[(iv).] For $1 \le q \le n$, $ K_{p,q}(X, B; L_d) \ne 0$ for 
\[   O(d^{q-1}) \  \le \  p \le r_d - O(d^{n-1}). \]
\end{enumerate}
Statement (i) is a consequence of Castelnuovo-Mumford regularity, while (ii) is due to Green and others. (See  \cite[\S3]{Kosz1}, \cite[Corollary 3.3, \S5]{ASAV}.) Assertion (ii) follows for instance from \cite{SAD}, while (iv) is the main result of \cite{ASAV}. 
Furthermore, it is conjectured in \cite{ASAV} that if $q \ge 2$, then $K_{p,q}(X, B; L_d) = 0$ for $p \le O(d^{q-1})$. 
\end{remark}
\section{A non-vanishing theorem}

This section is devoted to the proof of Theorem \ref{Non.Van.Prop.Intro} from the Introduction. Recall the statement:
\begin{theorem} \label{Non-Vanishing.Prop}
Assume that the non-singular projective variety $X$ carries an effective $(p+1)$-cycle $w = x_1 + \ldots + x_{p+1}$ consisting of $p+1$ \textnormal{distinct} points $x_1, \ldots, x_{p+1} \in X$ that fail to impose independent conditions on $H^0(X, B)$. Then
\[  K_{p,1}(X, B; L_d ) \ \ne \ 0 \ \text{ for } \ d \gg 0. \]
\end{theorem}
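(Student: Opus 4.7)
The proof adapts the $p = 0$ strategy sketched in the introduction. Green's computation on $X^{p+1}$ underlying Proposition \ref{Lemma.Surjectivity.Sufficient} contains $K_{p,1}(X,B;L_d)$ only as a direct summand, so to isolate $K_{p,1}$ one passes to $\Sym^{p+1}X$ --- equivalently, works $S_{p+1}$-equivariantly on $X^{p+1}$ and extracts the sign-isotypic component for the natural permutation action on the factors. At the sheaf level this amounts to replacing the tensor power $\otimes^{p+1}M_{L_d}$ appearing in the proof of Proposition \ref{Lemma.Surjectivity.Sufficient} by its sign-isotypic summand $\Lambda^{p+1}M_{L_d}$.

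The first step is to establish, for $d \gg 0$, the canonical identification
\[
K_{p,1}(X,B;L_d) \;\cong\; \Big(\coker\Big(H^0(X,B) \otimes H^0\big(X^{p+1}, L_d^{\boxtimes p+1}\big) \lra H^0\big(X^{p+1}, \EE_B \otimes L_d^{\boxtimes p+1}\big)\Big)\Big)^{\epsilon},
\]
where $(-)^\epsilon$ denotes the sign-isotypic summand. By Lemma \ref{EB.Properties}(iii) the cokernel displayed above is the cokernel of Green's restriction map. Decomposing the tensor-product sheaf sequence appearing in the proof of Proposition \ref{Lemma.Surjectivity.Sufficient} (with middle term $\otimes^{p+1}H^0(L_d) \otimes B$) into $S_{p+1}$-isotypic pieces, the sign-isotypic summand is precisely the Koszul sequence
\[
0 \lra \Lambda^{p+1}M_{L_d} \otimes B \lra \Lambda^{p+1}H^0(L_d) \otimes B \lra \Lambda^p M_{L_d} \otimes L_d \otimes B \lra 0,
\]
and by definition $K_{p,1}(X,B;L_d)$ is the failure of exactness on $H^0$ of this sequence at its middle term.

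Next I exploit the hypothesis. By Lemma \ref{EB.Properties}(i)--(ii), the fiber of $\ev_B$ at $\xi = (x_1, \ldots, x_{p+1}) \in X^{p+1}$ is the non-surjective evaluation $H^0(X,B) \to \bigoplus_i B|_{x_i}$, so $\mathcal{C} := \coker(\ev_B)$ has nonzero stalk at $\xi$. For $d \gg 0$, Serre vanishing gives $H^1(X^{p+1}, \mathcal{M}_{p+1,B} \otimes L_d^{\boxtimes p+1}) = 0$, so the cokernel in the first display equals $H^0(X^{p+1}, \mathcal{C} \otimes L_d^{\boxtimes p+1})$. Since the coordinates of $\xi$ are distinct, the $S_{p+1}$-orbit $T = \{\sigma\cdot\xi : \sigma \in S_{p+1}\}$ consists of $(p+1)!$ pairwise distinct points; a further application of Serre vanishing (to $\mathcal{C} \otimes \mathcal{I}_T$) produces, for $d \gg 0$, a surjective $S_{p+1}$-equivariant evaluation
\[
H^0\big(X^{p+1}, \mathcal{C} \otimes L_d^{\boxtimes p+1}\big) \;\twoheadrightarrow\; \bigoplus_{\sigma \in S_{p+1}} \big(\mathcal{C} \otimes L_d^{\boxtimes p+1}\big)\big|_{\sigma\cdot\xi}.
\]
The target is the induced representation $\tn{Ind}_{\{e\}}^{S_{p+1}}(V)$ for the nonzero space $V := \mathcal{C}|_\xi \otimes \bigotimes_i L_d|_{x_i}$, hence a multiple of the regular representation of $S_{p+1}$; in particular its sign-isotypic part is nonzero. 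Taking sign-isotypic parts of the surjection, the left-hand side also has nonzero sign-isotypic part, and combined with the identification above this yields $K_{p,1}(X,B;L_d) \ne 0$ for $d \gg 0$.

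The essential new ingredient --- and the main obstacle --- is the sign-isotypic identification in the first display: recognizing $K_{p,1}$ as exactly the sign-isotypic part of Green's cokernel, not merely as a summand of it. The sheaf on $\Sym^{p+1}X$ advertised in the introduction is designed to encode precisely this descent. Once the identification is in hand, the remainder --- nonvanishing of $\mathcal{C}$ at the reduced cycle and its promotion via Serre vanishing and the regular-representation argument to a nonzero sign-isotypic global section --- is a direct generalization of the $p=0$ argument from the introduction.
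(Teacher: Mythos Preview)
Your argument is correct, and it takes a genuinely different route from the paper's.

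Both proofs rest on the same starting point: Yang's identification of $K_{p,1}(X,B;L_d)$, for $d\gg 0$, with the sign-isotypic part of Green's cokernel (equivalently, the $S_{p+1}$-equivariant cokernel of \eqref{equivariant.restriction.equation}). From there the paper \emph{descends} to $\Sym^{p+1}(X)$: it builds the sheaves $\NNN(L)$, $\GGG(B;L)$, $\FF(B;L)$ and the auxiliary map $t$ of \eqref{Def.of.t}, shows $t$ is an isomorphism over the smooth locus of $\ol Z$, and then uses that the reduced cycle $w$ lies in that locus to deduce nonvanishing via Serre. You instead \emph{stay} on $X^{p+1}$: you identify Green's cokernel, for $d\gg 0$, with $H^0(\mathcal{C}\otimes L_d^{\boxtimes p+1})$ via two applications of Serre vanishing, and then evaluate equivariantly at the free $S_{p+1}$-orbit of $\xi$. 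Because the orbit is free, the target is a copy of the regular representation tensored with the nonzero fiber $\mathcal{C}|_\xi$, so its sign part is nonzero; surjectivity (again Serre) pushes this back. The reducedness hypothesis enters in the same essential way in both arguments --- for the paper, to land in the smooth locus of $\Sym^{p+1}(X)$; for you, to make the orbit free --- and neither approach sees how to go further without it. Your route is more elementary in that it avoids the construction and analysis of $t$.

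Two small points of phrasing. First, ``by definition $K_{p,1}$ is the failure of exactness on $H^0$ at the middle term'' is not quite right: what you need is that $K_{p,1}$ equals the \emph{cokernel} of $\Lambda^{p+1}H^0(L_d)\otimes H^0(B)\to H^0(\Lambda^p M_{L_d}\otimes L_d\otimes B)$, which is a standard consequence of the vanishing $H^i(X,B+mL_d)=0$ for $i,m>0$ (automatic for $d\gg 0$); this is precisely what the paper cites from \cite{Yang}. Second, the Serre-vanishing step producing the surjection onto $\bigoplus_\sigma(\mathcal{C}\otimes L_d^{\boxtimes p+1})|_{\sigma\xi}$ should strictly be applied to the coherent subsheaf $I_T\cdot\mathcal{C}\subseteq\mathcal{C}$ rather than to $\mathcal{C}\otimes I_T$, since right-exactness of $\otimes\,\OO_T$ gives $\mathcal{C}/(I_T\cdot\mathcal{C})=\mathcal{C}\otimes\OO_T$. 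Neither affects the validity of the argument.
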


The argument is somewhat technical, so before launching into  it we would like to outline the rough strategy. As in the case $p = 0$ discussed in the Introduction, in principle we would like to find a map of sheaves on $\Sym^{p+1}(X)$ depending on $B$ --  say
$a_B:  \mathcal{A}_1  \lra \mathcal{A}_2 $
-- having the property that
\begin{equation} \label{Intro2.Eqn} K_{p,1}(X,B; L_d)  \ = \ \coker\Big( H^0(\mathcal{A}_1 \otimes \NNN(L_d)) \lra H^0(\mathcal{A}_2 \otimes \NNN(L_d)) \Big),\end{equation}
where  $\NNN(L_d)$ is a line bundle whose positivity grows suitably with $d$.   Ideally -- as in equation (***) from the Introduction -- we would be able to see that $a_B$ cannot be surjective as a map of sheaves if $B$ is not $p$-jet very ample, and then one could hope to apply Serre vanishing to conclude that $K_{p,1}(X, B;L_d)$ cannot vanish for $d \gg 0$. Unfortunately we do now know whether such a construction is possible.  Instead, what we do in effect  is to use the ideas of the third author from \cite{Yang} to construct a map $\alpha_B$, and show that the non-vanishing of $K_{p,1}$ is implied by the non-vanishing of a certain quotient sheaf of $\mathcal{A}_2$.  We  show that a \textit{reduced} $(p+1)$-cycle that fails to impose independent conditions on $H^0(X,B)$ must appear in the support of this quotient, and this leads to the stated non-vanishing. 

We start by recalling the results \cite{Yang} of the third author interpreting $K_{p,1}$ as an equivariant cohomology group. Consider then a very ample line bundle $L$ on the smooth complex projective variety $X$. Then  the symmetric group $S_{p+1}$ acts in two ways on  the bundle  $L^{\boxtimes p+1}$ on $X^{p+1}$, namely via the symmetric and the alternating characters. Denote these $S_{p+1}$-bundles on $X^{p+1}$ by
\begin{equation} \label{Equivar.Bundles}
L^{\boxtimes p+1, \text{sym}} \ \quad \text{and} \ \quad L^{\boxtimes p+1, \text{alt}} \end{equation}
 respectively. Now let $S_{p+1}$ act on $X\times X^{p+1}$ via the trivial action on the first factor, so that the union of pairwise diagonals $Z \subseteq X \times X^{p+1}$ defined in \eqref{Def.of.Z} becomes an $S_{p+1}$-subspace. It is established in  \cite[Theorem 3]{Yang} that if 
\begin{equation} \label{vanishing.hypothesis}
H^i(X, mL) = H^i(X, B + mL) \ = \ 0 \ \text{for} \ i \, , m \, > 0, \end{equation} then $K_{p,1}(X, B;L)$ is identified with the cokernel of the restriction mapping
\begin{equation} \label{equivariant.restriction.equation}
  H^0_{S_{p+1}}\big( X \times X^{p+1}, \pro_1^* B \otimes \pro_2^* L^{\boxtimes p+1, \text{alt}} \big) \lra H^0_{S_{p+1}}\big( Z, \pro_1^* B \otimes \pro_2^* L^{\boxtimes p+1, \text{alt} } \otimes \OO_Z \big)  \end{equation}
on $S_{p+1}$-equivariant cohomology groups.\footnote{For the theory of equivariant cohomology groups and pushforwards, see \cite[Chapter 5] {Tohuku}. What we need can be summarized as follows. Let $G$ be a finite group acting on a complex projective variety $V$, and suppose given a coherent sheaf $F$ on $X$ together with an action of $G$ on $F$. Then one can define equivariant cohomology groups $H^j_G(V, F)$. While this isn't how they are initially constructed, one can show that 
\[ 
H^j_G(V,F) \ = \ H^j(V,F)^G, 
\] 
the group on the right being the $G$-invariant subspace of $H^j(V,F)$ under the natural action of $G$ on  this cohomology group \cite[p.~ 202]{Tohuku}, and for practical purposes one can take this as the definition.  Writing 
\[ \pi : V \lra V/G \, =_\text{def}\, W,\] one also has an action of $G$ on $\pi_* F$. The $G$-equivariant direct image of $F$ can be interpreted as
\[   \pi^G_* F \ = \ \big( \pi_*(F)\big)^G, \]
and  one can show that
\[  H^j_G\big(V, F\big) \ = \ H^j\big(W, \pi_*^G F\big).\]  } 
One can think of this as a precision and strengthening of Proposition \ref{Lemma.Surjectivity.Sufficient}. Following the line of attack of Section 1, the plan is to study these groups by modding out by the symmetric group and pushing down to the symmetric product.

To this end, denote by $\Sym^{p+1}(X)$ the $(p+1)^{\text{st}}$ symmetric product of $X$, which we view as parametrizing zero-cycles of degree $p+1$, and write  \[ \pi : X^{p+1} \lra \Sym^{p+1}(X)\] for the quotient map. The equivariant pushforward of the line bundles in \eqref{Equivar.Bundles} determine respectively a line bundle and torsion-free sheaf of rank one \[   \SSS_{p+1}(L) \ =_{\text{def}}  \ \pi_*^{S_{p+1}} \big( L^{\boxtimes p+1, \text{sym}} \big)   \ \  , \ \  \NNN_{p+1}(L) \ =_\text{def} \pi_*^{S_{p+1}} \big( L^{\boxtimes p+1, \text{alt}} \big) \]
on $\Sym^{p+1}(X)$. One has 
\begin{align*}  \HH{0}{\Sym^{p+1}(X)}{\SSS_{p+1}(L) } \ &= \ \Sym^{p+1}H^0(X,L)\\ \HH{0}{\Sym^{p+1}(X)}{\NNN_{p+1}(L) } \ &= \ \Lambda^{p+1}H^0(X,L),\end{align*}
and for any line bundle $A$ on $X$:
\begin{equation} \label{Twisting} \SSS(L\otimes A) \, = \, \SSS(L) \otimes \SSS(A) \ \ , \ \ \NNN(L\otimes A) \, = \, \NNN(L) \otimes \SSS(A). \end{equation}
Moreover $\SSS(A)$ is ample if $A$ is. 

With these preliminaries out of the way, we now give the:
\begin{proof}[Proof of Theorem \ref{Non-Vanishing.Prop}] Note to begin with that the symmetric group $S_{p+1}$ acts on each of the spaces appearing in the diagram  \eqref{Basic.Diagram}.  Taking the quotients yields the diagram:
\begin{equation}   \label{Symmetric.Diagram}
\xymatrix{
\ol{Z} = X \times \Sym^p(X)   \ar[dr]_{\ol{\sigma}} &\subseteq  &    X \times \Sym^{p+1}(X) \ar[dl]^{p_2}   \\ & \Sym^{p+1}(X)
} 
\end{equation}
where $\ol{\sigma}$ is the addition map,  \[ p_1 : X \times \Sym^{p+1}(X)  \lra  X \ \ , \ \ p_2 : X \times \Sym^{p+1}(X) \lra \Sym^{p+1}(X)\]
are  the projections, and the inclusion on the top line is given by
$(x, w) \mapsto (x, x+w)$.  One has
\[  
(1 \times \pi)_*^{S_{p+1}} \big( \pro_1^* B \otimes \pro_2^* L^{\boxtimes p+1, \text{alt}} \big) \ = \ p_1^* \, B \otimes p_2^*\,\NNN(L). 
\] 

Now define 
\[  \GGG(B; L) \ = \ (1 \times \pi)_*^{S_{p+1}} \big( \pro_1^* B \otimes \pro_2^* L^{\boxtimes p+1, \text{alt}} \otimes \OO_Z\big).\]
Pushing forward the restriction to $Z$ gives rise to  a natural surjective mapping
\begin{equation} \label{push.forward.equation.2}  \eps(B;L)\,  : \, p_1^* \, B \otimes p_2^*\,\NNN(L) \lra \GGG(B; L) \end{equation} of sheaves on $X \times \Sym^{p+1}(X)$.
Thanks to \cite[\S 5.2]{Tohuku} the groups appearing in \eqref{equivariant.restriction.equation} are given by the global sections of the sheaves in \eqref{push.forward.equation.2}, and hence under the vanishing hypothesis \eqref{vanishing.hypothesis}, $K_{p,1}(X, B; L)$
is computed as the cokernel \[K_{p,1}(X, B; L) \ = \ 
 \coker \Big( H^0\big(\eps(B;L)\big ) \Big)  \] on global sections determined by $\eps(B;L)$. Hence we are reduced to showing that under the hypothesis of the Proposition, $\eps(B;L_d)$ cannot be surjective on global sections when $d \gg 0$. 

The next step is to form and study the push-forward of \eqref{push.forward.equation.2} to $\Sym^{p+1}(X)$. 
To begin with, define
$\FFF(B;L) = p_{2,*} \GGG(B;L)$ and $\delta(B;L) = p_{2,*} \eps(B;L)$. This gives rise to a morphism 
\[
\delta(B;L) :  H^0(X, B) \otimes_\CC \NNN(L) \lra \FF(B;L) 
\]
of sheaves on $\Sym^{p+1}(X)$ with the property that
\[  K_{p,1}(X,B;L) \ = \ \coker \Big( H^0\big(\delta(B,L)\big)\Big). \]

We wish to study  the geometry of this mapping assuming that $B$ does not impose independent conditions on all reduced cycles. We assert that there is a natural homomorphism
\begin{equation}\label{Def.of.t}  t: \big(\,  p_1^* \, B \otimes p_2^* \, \NNN(L) \otimes \OO_{\ol{Z}} \, \big) \lra \GGG(B; L)\end{equation}
which is an isomorphism on the smooth locus of $\ol{Z}$. Grant this for the time being. By the projection formula one has
\[
p_{2, *}\Big( p_1^*B \otimes p_2^*\, \NNN(L) \otimes \OO_{\ol Z} \Big) \ = \ \ol{\sigma}_* \big( p_1^* B\big) \otimes \NNN(L),
\]
and then taking direct images in  \eqref{push.forward.equation.2}  and \eqref{Def.of.t}, one arrives at a diagram
\begin{equation} \label{Pushdown.Diagram}
\begin{gathered}\xymatrix{
H^0(X, B) \otimes_\CC \NNN(L) \ar[r]^{e\otimes 1} \ar[dr]_{\delta(B;L)} & \ol{\sigma}_*\big( p_1^* B\big) \otimes \NNN(L) \ar[d]^s\\ & \FFF(B;L),
} 
\end{gathered}
\end{equation}
where  $s$ is an isomorphism over the smooth locus of $\Sym^{p+1}(X)$.

Now fix a reduced zero-cycle $w \in \Sym^{p+1}(X)$ that fails to impose independent conditions on $H^0(X, B)$, and let $\xi \subseteq X$ be the corresponding subscheme of length $p+1$. We can identify the  fibre  of the  morphism 
$e : H^0(X, B) \otimes \OO_{\Sym^{p+1}(X)} \lra \ol{\sigma}\big( p_1^* B\big)$ appearing in \eqref{Pushdown.Diagram} at $w$ with the evaluation $H^0(X, B) \lra H^0(X, B \otimes \OO_\xi)$. Writing $\mathcal{K} = \coker ( e)$, so that
\[  \coker ( e \otimes 1) \ = \ \mathcal{K} \otimes \NNN(L), \] it follows that
$ w \ \in \ \textnormal{supp}\, \big(\mathcal{K} \otimes \NNN(L)\big)$.  But thanks to \eqref{Twisting} and Serre vanishing, $\ol{\sigma}_*\big( p_1^* B\big) \otimes \NNN(L_d)$ is globally generated and 
\[  H^0\big( \ol{\sigma}( p_1^* B) \otimes \NNN(L_d)\big)  \lra  H^0\big(\mathcal{K} \otimes \NNN(L_d)\big)\]
is surjective when $d \gg 0$. On the other hand, $s$ is an isomorphism in a neighborhood of $w$ since $w$ is reduced, and it then follows that the map
\[  H^0\big(\mathcal{K} \otimes \NNN(L_d)\big)\lra H^0\big(\coker (\delta(B;L_d)\big)\]
is non-zero when $d \gg 0$. In other words, we have a  commutative diagram
\[
\xymatrix{
H^0(B) \otimes H^0(\NNN(L_d))\ar[r] \ar[dr]_{\delta(B; L_d)}  &H^0\big( \ol{\sigma}( p_1^* B) \otimes \NNN(L_d)\big)  \ar@{>>}[r] \ar[d] \ar[dr]&H^0\big(\mathcal{K} \otimes \NNN(L_d)\big)\ar[d]^{\ne 0}\\
& H^0\big( \FFF(B; L_d)\big) \ar[r] &H^0\big( \coker( \delta(B; L_d)\big),
}
\]
with exact top row, in which the right-hand diagonal mapping, and hence also the bottom homomorphism, are  non-zero. Therefore $\delta(B;L_d)$ cannot be surjective on global sections when $d \gg 0$, as required.

It remains to construct the homomorphism $t$ appearing in \eqref{Def.of.t}. To this end, let
\[
\widehat{Z} \ = \ \ol{Z} \, \times_{X \times \Sym^{p+1}(X)} \big( X \times  X^{p+1}\big).\]
 The projection formula gives an isomorphism
\[
(1 \times \pi)_* \Big( \left( \pro_1^*B \otimes \pro_2^*L^{\boxtimes p+1, \text{alt}} \right) \otimes \OO_{\widehat{Z}}\Big) \ = \ \Big( (1 \times \pi)_* \big( \pro_1^* B \otimes \pro_2^* L^{\boxtimes{p+1, \text{alt}}}\big)\Big) \otimes \OO_{\ol{Z}}, 
\] which, upon taking $S_{p+1}$ invariants, yields
\[
(1 \times \pi)_*^{S_{p+1}} \big( \pro_1^* B \otimes \pro_2^* L^{\boxtimes p+1, \text{alt}} \otimes \OO_{\widehat{Z}}\big) 
 \ = \ \big( p_1^* B \otimes p_2^* \NNN(L) \big) \otimes \OO_{\ol{Z}}. 
\]
On the other hand, as a set $\widehat{Z}$ consists of those points 
\[   \big( x_0, (x_1, \ldots, x_{p+1}) \big) \in X \times X^{p+1} \]
having the property that $x_0$ appears in the cycle $x_1 + \ldots + x_{p+1}$. In other words, $\widehat{Z}$ and $Z$ coincide set-theoretically. Since $Z$ is reduced this implies that $Z = \widehat{Z}_\text{red}$, and in particular $Z$ is a subscheme of $\widehat {Z}$. Thus there is a natural surjective map
\[
\big( \pro_1^* B \otimes \pro_2^* L^{\boxtimes p+1, \text{alt}} \big) \otimes \OO_{\widehat{Z}} \lra \big( \pro_1^* B \otimes \pro_2^* L^{\boxtimes p+1, \text{alt}} \big)\otimes \OO_Z
\]
which is an isomorphism over the smooth locus of $Z$, and taking direct images gives  \eqref{Def.of.t}.
\end{proof}

\begin{remark}
It would be very interesting to give a necessary and sufficient condition for   the non-vanishing of $K_{p,1}(X, B;L_d)$ for $d \gg 0$. Keeping the notation of the previous proof, the issue is to determine when $\delta(B;L)$ has a non-zero cokernel. It is conceivable that  the failure of $B$ to be $p$-jet very ample suffices, but the question seems somewhat difficult to analyze. Already the case $\dim X = 2$ would be interesting.  \end{remark}

\begin{remark} Recall that a line bundle $B$ on a smooth variety $X$  is said to be $p$-very ample if every finite subscheme of length $p+1$ imposes independent conditions on $H^0(X,B)$. When $\dim X = 1$ this is the same as jet-amplitude, but when $\dim X \ge 2$ it is a strictly weaker condition in general. A quick way to see this is to recall that if $A$ is  an ample line bundle on a smooth surface $X$, then $B_p = K_X + (p+3)A$ is always $p$-very ample thanks to a theorem of Beltrametti and Sommese [BS]. On the other hand, the $p$-jet amplitude of $B_p$ for $p \gg 0$ would imply that  the Seshadri constant $\eps(A;x)$ is very close to $1$ for every point $x \in X$ (see \cite[Proposition 5.10]{LLS}). Hence any line bundle $A$ for which there exist points with small Seshadri constant gives rise to examples of the required sort. 

\end{remark}
%%%%%%%%%%%%%%%%%%%%%%%%%
 %
 %
 %  REFERENCES
 %
 %
 %%%%%%%%%%%%%%%%%%%%%%%%%

 \end{document}